\newtheorem{theorem}{Theorem}[section]
\newtheorem{corollary}[theorem]{Corollary}
\def\barr{\begin{array}}
\def\earr{\end{array}}
\title{Generalizing the Lehmer's totient problem}
\author{Marius T\u arn\u auceanu}
\date{October 18, 2021}
\begin{document}

\maketitle

\begin{abstract}
An important unsolved question in number theory is the Lehmer’s totient problem that asks whether there exists any composite number $n$ such that $\varphi(n)\mid n-1$, where $\varphi$ is the Euler's totient function. It is known that if any such $n$ exists, it must be odd, square-free, greater that $10^{30}$, and divisible by at least $15$ distinct primes. Such a number must be also a Carmichael number.

In this short note, we discuss a group-theoretical analogous pro\-blem involving the function that counts the number of automorphisms of a finite group. Another way to generalize the Lehmer’s totient problem is also proposed.
\end{abstract}
\smallskip

{\small
\noindent
{\bf MSC 2020\,:} Primary 20D60, 11A25; Secondary 20D99, 11A99.

\noindent
{\bf Key words\,:} Lehmer's totient problem, Euler's totient function, finite group, group automorphism, exponent of a group.}

\section{Introduction}

The \textit{Euler's totient function} $\varphi$ is one of the most famous functions in number theory. Recall that the totient $\varphi(n)$ of a positive integer $n$ is defined to be the number of positive integers less than or equal to $n$ that are coprime to $n$. In algebra this function is important mainly because it gives the order of the group of units in the ring $(\mathbb{Z}_n,+,\cdot)$. Also, $\varphi(n)$ can be seen as the number of generators or as the number of automorphisms of the cyclic group $(\mathbb{Z}_n,+)$.

Lehmer’s totient problem \cite{6} asks whether the well-known property "$\varphi(n)=n-1\Leftrightarrow n$ is a prime" can be ge\-ne\-ra\-li\-zed to "$\varphi(n)\mid n-1\Leftrightarrow n$ is a prime". This problem has been studied by many mathematicians (see e.g. \cite{2,3,4,6,7}), but up to now no counterexample has been found. Such a counterexample is often called a \textit{Lehmer number}.

We observe that an integer $n\geq 2$ is a prime or a Lehmer number if and only if
\begin{equation}
|G|-1\equiv 0\,\, ({\rm mod}\,\, |{\rm Aut}(G)|),
\end{equation}where $G$ a cyclic group of order $n$. This fact suggests us to consider \textit{arbitrary} finite groups $G$ which satisfy the relation (1). Their description is given by the following theorem.

\begin{theorem}
A finite group $G$ satisfies the relation $(1)$ if and only if it is cyclic and its order is a prime or a Lehmer number.
\end{theorem}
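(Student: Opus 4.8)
The plan is to split the equivalence into its two implications; the ``only if'' part is the one requiring work, and its heart is the observation that the congruence $(1)$ forces $\gcd(|G|,|\mathrm{Aut}(G)|)=1$, which is already enough to pin down the isomorphism type of $G$.

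The ``if'' direction is immediate: if $G$ is cyclic of order $n$ with $n$ a prime or a Lehmer number, then $|\mathrm{Aut}(G)|=\varphi(n)$, and $\varphi(n)\mid n-1$ holds either trivially (when $n$ is prime) or by definition (when $n$ is a Lehmer number); since $|G|-1=n-1$, the relation $(1)$ follows.

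For the ``only if'' direction, assume $G$ satisfies $(1)$ and (discarding the trivial case) that $G\neq 1$; put $n=|G|$. Because $\gcd(n-1,n)=1$, the divisibility $|\mathrm{Aut}(G)|\mid n-1$ yields $\gcd(|\mathrm{Aut}(G)|,|G|)=1$. I would first rule out $G$ nonabelian: then $Z(G)\neq G$, so $\mathrm{Inn}(G)\cong G/Z(G)$ is a nontrivial subgroup of $\mathrm{Aut}(G)$, whence $1<|G/Z(G)|$ divides both $|\mathrm{Aut}(G)|$ and $|G|$, contradicting the coprimality. So $G$ is abelian, and I write it as the internal direct product of its (characteristic) Sylow subgroups $G_p$, so that $\mathrm{Aut}(G)\cong\prod_p\mathrm{Aut}(G_p)$. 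If every $G_p$ is cyclic then $G$ is cyclic and we are done; otherwise some $G_p$ is non-cyclic, so by the structure theorem $G_p=\langle x\rangle\times H$ with $\langle x\rangle$ a cyclic direct factor and $H\neq 1$. Picking $h\in H$ of order $p$, the endomorphism of $G_p$ determined by $x\mapsto xh$ and by the identity on $H$ is a well-defined automorphism of order exactly $p$ (one checks that $|xh|$ divides $|x|$, that the map is injective, and that its $k$-th power fixes $H$ and sends $x$ to $xh^{k}$). Then $p\mid|\mathrm{Aut}(G_p)|$ divides $|\mathrm{Aut}(G)|$, while also $p\mid|G|$, again contradicting $\gcd(|\mathrm{Aut}(G)|,|G|)=1$. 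Hence $G$ is cyclic, say of order $n\geq 2$; then $(1)$ becomes $\varphi(n)\mid n-1$, which by the equivalence recorded just before the theorem means precisely that $n$ is a prime or a Lehmer number.

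The main obstacle is the last case distinction in the third paragraph: showing that a non-cyclic finite abelian group always has an automorphism of prime order dividing the group order (equivalently, that the only finite groups with $\gcd(|G|,|\mathrm{Aut}(G)|)=1$ are cyclic). The nonabelian reduction via inner automorphisms and the final translation into the Lehmer condition are routine; the one place demanding care is the explicit construction in the abelian $p$-group case, namely the verification that the prescribed map is a genuine automorphism and has order $p$ --- a step one could alternatively bypass by citing the known classification of finite groups satisfying $\gcd(|G|,|\mathrm{Aut}(G)|)=1$.
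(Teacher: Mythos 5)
Your proof is correct, and its skeleton --- reduce to the abelian case via inner automorphisms, split $G$ into its Sylow subgroups, derive a contradiction from a prime dividing both $|G|$ and $|{\rm Aut}(G)|$, then read off the Lehmer condition from $\varphi(n)\mid n-1$ --- matches the paper's. Where you genuinely diverge is the key lemma about abelian $p$-groups. The paper invokes the explicit Bidwell--Curran--McCaughan/Hillar--Rhea formula for $|{\rm Aut}(\prod_i \mathbb{Z}_{p^{n_i}})|$ (its Theorem 2.1) and deduces that $p\nmid|{\rm Aut}(G_p)|$ forces $G_p\cong\mathbb{Z}_p$, whereas you construct by hand an automorphism of order $p$ of any non-cyclic abelian $p$-group (send a generator $x$ of a cyclic direct factor to $xh$ with $h$ of order $p$ in the complement, and fix the complement), concluding only that each $G_p$ is cyclic. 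Your construction checks out: the map is well-defined because $p$ divides $|x|$, injective by uniqueness of the decomposition $G_p=\langle x\rangle\times H$, and its $k$-th power sends $x$ to $xh^k$, so its order is exactly $p$. The trade-off is that the paper's route yields the stronger intermediate fact that $|G|$ is squarefree (it also excludes $\mathbb{Z}_{p^n}$ for $n\geq 2$), consistent with the known squarefreeness of Lehmer numbers, but leans on a nontrivial counting formula; your route is elementary and self-contained, and cyclicity of $G$ is all that is actually needed, since $(1)$ then becomes $\varphi(n)\mid n-1$. You are also slightly more careful than the paper in setting aside the trivial group, which formally satisfies $(1)$ yet has order neither prime nor Lehmer.
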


Finally, we indicate another way to extend the Lehmer’s totient problem via group theory.

\bigskip\noindent{\bf Open problem.} Determine the finite groups $G$ satisfying
\begin{equation}
|G|-1\equiv 0\,\, ({\rm mod}\,\, \varphi(G)),
\end{equation} where
\begin{equation}
\varphi(G)=|\{a\in G \mid o(a)=\exp(G)\}|\nonumber
\end{equation}is the generalization of the Euler's totient function studied in \cite{8}.

Note that since $\varphi(\mathbb{Z}_n)=\varphi(n)$ for all $n\in\mathbb{N}^*$, cyclic groups of prime or Lehmer order will be solutions of (2).

\section{Proof of Theorem 1.1}

First of all, we recall the well-known formula for the number of automorphisms of a finite abelian $p$-group (see e.g. \cite{1,5}).

\begin{theorem}
Let $G=\prod_{i=1}^k \mathbb{Z}_{p^{n_i}}$ be a finite abelian$p$-group, where $1\leq n_1\leq n_2\leq...\leq n_k$. Then
\begin{equation}
|{\rm Aut}(G)|=\prod_{i=1}^k (p^{a_i}-p^{i-1})\prod_{u=1}^k p^{n_u(k-a_u)}\prod_{v=1}^k p^{(n_v-1)(k-b_v+1)}\,,
\end{equation}where
\begin{equation}
a_r=max\{s\mid n_s=n_r\} \mbox{ and } b_r=min\{s\mid n_s=n_r\}\,,\, r=1,2,...,k\,.\nonumber
\end{equation}
\end{theorem}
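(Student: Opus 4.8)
The plan is to realize $\mathrm{End}(G)$ concretely as a ring of matrices and then to isolate the automorphisms by reduction modulo $p$. Writing a generator of the $i$-th cyclic factor as $g_i$, any endomorphism $\phi$ is recorded by the matrix $(a_{ij})$ determined by $\phi(g_j)=\sum_{i=1}^k a_{ij}\,g_i$. Since $g_i$ has order $p^{n_i}$ and $g_j$ has order $p^{n_j}$, a homomorphism $\mathbb{Z}_{p^{n_j}}\to\mathbb{Z}_{p^{n_i}}$ sends a generator to an element whose order divides $p^{n_j}$; hence $\mathrm{Hom}(\mathbb{Z}_{p^{n_j}},\mathbb{Z}_{p^{n_i}})\cong\mathbb{Z}_{p^{\min(n_i,n_j)}}$, and well-definedness forces $p^{\,n_i-n_j}\mid a_{ij}$ whenever $n_i>n_j$. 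First I would establish this dictionary, which yields $|\mathrm{End}(G)|=\prod_{i,j}p^{\min(n_i,n_j)}$ and, in particular, that every entry lying strictly below the diagonal blocks (the indices with $n_i>n_j$) is divisible by $p$.

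Next I would characterize the automorphisms. Because $G$ is a finite $p$-group, an endomorphism is an automorphism iff it is surjective, and by the Frattini argument this happens iff the induced $\mathbb{F}_p$-linear map $\bar\phi$ on $G/pG\cong\mathbb{F}_p^{\,k}$ is invertible. The matrix of $\bar\phi$ is $(a_{ij}\bmod p)$, and by the divisibility just noted its $(i,j)$ entry vanishes whenever $n_i>n_j$. Ordering the factors by increasing $n_i$ and grouping equal exponents, $\bar\phi$ is therefore block upper-triangular, with the diagonal blocks (of sizes equal to the multiplicities $m_s$ of the distinct exponents) ranging freely over $M_{m_s}(\mathbb{F}_p)$ and the strictly-upper entries free. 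Thus $\bar\phi$ is invertible iff each diagonal block lies in $GL_{m_s}(\mathbb{F}_p)$.

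The counting then proceeds in two stages. The number of admissible matrices $\bar\phi$ with invertible diagonal blocks equals $p^{S_{<}}\prod_s|GL_{m_s}(\mathbb{F}_p)|$, where $S_{<}=\#\{(i,j):n_i<n_j\}$ counts the free strictly-upper entries. Since reduction modulo $p$ is an additive homomorphism from $\mathrm{End}(G)$ onto the set of admissible matrices, all its fibres have the same cardinality $|\mathrm{End}(G)|/p^{S_{\le}}$, with $S_{\le}=\#\{(i,j):n_i\le n_j\}$. Multiplying the number of invertible reductions by the common fibre size and using $S_{\le}-S_{<}=\sum_s m_s^2$, I obtain the compact expression
\begin{equation}
|\mathrm{Aut}(G)|=p^{\,\sum_{i,j}\min(n_i,n_j)\,-\,\sum_s m_s^2}\prod_s|GL_{m_s}(\mathbb{F}_p)|.\nonumber
\end{equation}

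The last step, which I expect to be the main obstacle, is purely bookkeeping: rewriting this compact expression in the stated three-factor form. Reindexing each block $\{b_s,\dots,a_s\}$ via $|GL_{m_s}(\mathbb{F}_p)|=\prod_{i=b_s}^{a_s}(p^{m_s}-p^{\,i-b_s})$ and pulling out powers of $p$ shows that $\prod_s|GL_{m_s}(\mathbb{F}_p)|$ agrees with $\prod_{i=1}^k(p^{a_i}-p^{\,i-1})$ up to an explicit power of $p$; collecting that correction together with the exponent $\sum_{i,j}\min(n_i,n_j)-\sum_s m_s^2$, one must then verify the exponent identity that the total equals $\sum_{u=1}^k n_u(k-a_u)+\sum_{v=1}^k (n_v-1)(k-b_v+1)$. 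This reduces to a finite summation manipulation using the definitions $a_r=\max\{s:n_s=n_r\}$ and $b_r=\min\{s:n_s=n_r\}$, and the extreme cases $k=1$ and all $n_i$ equal, where the formula collapses to $p-1$ and to $|GL_k(\mathbb{F}_p)|$ respectively, serve as a check on the indexing.
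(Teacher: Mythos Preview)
The paper does not prove this theorem at all: it is merely \emph{recalled} as the ``well-known formula'' for $|\mathrm{Aut}(G)|$, with citations to Bidwell--Curran--McCaughan and Hillar--Rhea, and then used as a black box to obtain Corollary~2.2. There is therefore no in-paper argument to compare against.

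That said, your outline is correct and is essentially the Hillar--Rhea proof that the paper cites: represent $\mathrm{End}(G)$ by $k\times k$ integer matrices subject to the divisibility constraints $p^{\,n_i-n_j}\mid a_{ij}$ for $n_i>n_j$; observe that an endomorphism of a finite $p$-group is an automorphism iff it is invertible on the Frattini quotient $G/pG\cong\mathbb{F}_p^{\,k}$; note that the reduced matrix is block upper-triangular with diagonal blocks of sizes $m_s$; and count by combining $\prod_s|GL_{m_s}(\mathbb{F}_p)|$, the free strictly-upper entries, and the common fibre size of the reduction map. Your fibre-size computation via $|\mathrm{End}(G)|/p^{S_\le}$ and the identity $S_\le-S_<=\sum_s m_s^2$ are both correct. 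The final step you flag as ``the main obstacle'' is genuinely only bookkeeping: writing $|GL_{m_s}(\mathbb{F}_p)|=\prod_{i=b_s}^{a_s}(p^{a_s-b_s+1}-p^{\,i-b_s})=p^{-\sum(b_s-1)}\prod_i(p^{a_i}-p^{\,i-1})$ over that block and matching exponents recovers the stated three-factor form; your sanity checks at $k=1$ and at all $n_i$ equal are the right ones.
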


By using Theorem 2.1, we easily get the following corollary.

\begin{corollary}
Let $G$ be a finite abelian $p$-group. If $p\nmid |{\rm Aut}(G)|$, then $G\cong\mathbb{Z}_p$.
\end{corollary}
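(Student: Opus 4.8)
The plan is simply to read off the exact power of $p$ dividing $|\mathrm{Aut}(G)|$ from formula $(3)$ and to see that it can vanish only when $G\cong\mathbb{Z}_p$. So write $G=\prod_{i=1}^{k}\mathbb{Z}_{p^{n_i}}$ with $1\le n_1\le n_2\le\cdots\le n_k$, and keep the notation $a_r,b_r$ of Theorem 2.1. The last two products in $(3)$ are (non-negative) powers of $p$, hence irrelevant for coprimality to $p$; everything therefore comes down to analyzing the first product $\prod_{i=1}^{k}\bigl(p^{a_i}-p^{i-1}\bigr)$.

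The one point to notice is that, since the sequence $n_1\le\cdots\le n_k$ is non-decreasing, the set $\{s\mid n_s=n_r\}$ is an interval of integers containing $r$, so that $a_r=\max\{s\mid n_s=n_r\}\ge r$ for every $r$. Consequently $p^{a_i}-p^{i-1}=p^{i-1}\bigl(p^{\,a_i-i+1}-1\bigr)$ with $a_i-i+1\ge 1$, so the exact power of $p$ dividing the $i$-th factor is $p^{i-1}$, and therefore
\[
v_p\bigl(|\mathrm{Aut}(G)|\bigr)\ \ge\ \sum_{i=1}^{k}(i-1)\ =\ \binom{k}{2}.
\]
Thus the hypothesis $p\nmid|\mathrm{Aut}(G)|$ forces $\binom{k}{2}=0$, i.e.\ $k=1$, and hence $G\cong\mathbb{Z}_{p^{n_1}}$.

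It only remains to settle the case $k=1$: here $a_1=b_1=1$, and $(3)$ collapses to $|\mathrm{Aut}(G)|=(p-1)\,p^{\,n_1-1}$ (the familiar value $\varphi(p^{n_1})$), so coprimality to $p$ forces $n_1=1$, that is $G\cong\mathbb{Z}_p$; conversely $|\mathrm{Aut}(\mathbb{Z}_p)|=p-1$ is prime to $p$. There is essentially no obstacle in this argument: the only mildly delicate step is recognizing the inequality $a_i\ge i$, which is precisely what guarantees that the first product already carries the full $p$-power $p^{\binom{k}{2}}$ --- once this is in hand, the rest is routine bookkeeping of $p$-adic valuations.
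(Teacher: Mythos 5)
Your proof is correct and follows essentially the same route as the paper: both arguments read off the $p$-adic valuation of $|\mathrm{Aut}(G)|$ directly from formula (3). The only difference is the order of deductions --- the paper first forces all $n_i=1$ (via the third product) and then gets $k=1$ from the first product, whereas you extract $k=1$ from the first product via the inequality $a_i\ge i$ and then settle $n_1=1$ afterwards; both are sound.
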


\begin{proof}
Under the notation in Theorem 2.1, we infer that $n_1=n_2=\dots=n_k=1$ and $a_1=a_2=\dots=a_k=k$. Then the first product in the right side of (3) is $\prod_{i=1}^k (p^k-p^{i-1})$. Obviously, this is not divisible by $p$ if and only if $k=1$. Thus $G\cong\mathbb{Z}_p$, as desired.
\end{proof}

We are now able to prove our main result.

\bigskip\noindent{\bf Proof of Theorem 1.1.} Let $G$ be a finite group satisfying (1).

We first prove that $G$ is abelian. If not, then $Z(G)\neq G$ and so there exists a prime $p$ dividing $|G/Z(G)|$. Since $G/Z(G)$ can be embedded in ${\rm Aut}(G)$, it follows that $p$ divides $|{\rm Aut}(G)|$. Consequently, $p\mid |G|-1$, contradicting the fact that $p\mid |G|$. Thus $G$ is abelian.

Let $G=\prod_{i=1}^m G_i$, where $G_i$ is a finite abelian $p_i$-group, $i=1,2,\dots,m$. Since
\begin{equation}
|{\rm Aut}(G)|=\prod_{i=1}^m |{\rm Aut}(G_i)| \mbox{ and } |G|=\prod_{i=1}^m |G_i|,\nonumber
\end{equation}by (1) we infer that $p_i\nmid |{\rm Aut}(G_i)|$ for each $i$. Then Corollary 2.2 implies $G_i\cong\mathbb{Z}_{p_i}$ and therefore
\begin{equation}
G\cong\prod_{i=1}^m \mathbb{Z}_{p_i}\cong\mathbb{Z}_{p_1p_2\cdots p_m}\nonumber
\end{equation}is cyclic. Moreover, (1) becomes $|G|-1\equiv 0\,\, ({\rm mod}\,\, \varphi(|G|))$, i.e. $|G|$ is a prime or a Lehmer number. This completes the proof.\qed

\vspace*{3ex}\small

\hfill
\begin{minipage}[t]{5cm}
Marius T\u arn\u auceanu \\
Faculty of  Mathematics \\
``Al.I. Cuza'' University \\
Ia\c si, Romania \\
e-mail: {\tt tarnauc@uaic.ro}
\end{minipage}

\end{document}